\documentclass[12pt]{article}

\usepackage{amsmath, amsfonts, amsthm, mathtools, amscd}
\usepackage{mathrsfs} 
\usepackage{dsfont}
\usepackage[mathscr]{eucal}
\usepackage{graphicx}
\usepackage{csquotes}
\usepackage{enumitem}
\usepackage{xcolor}

\usepackage[T1]{fontenc}
\usepackage{newtxtext, newtxmath} 

\usepackage[english]{babel}
\usepackage{microtype}
\usepackage{setspace}
\usepackage[a4paper, textwidth=13.5cm]{geometry}
\usepackage{titling}

\pretitle{\begin{center}\LARGE\bfseries}
	\posttitle{\par\end{center}\vskip 0.5em}
\preauthor{\begin{center}\large}
	\postauthor{\end{center}}
\predate{\begin{center}\normalsize}
	\postdate{\end{center}}

\usepackage{titlesec}

\titleformat{\section}{\large\bfseries}{\thesection}{1em}{}
\titleformat{\subsection}{\normalsize\bfseries}{\thesubsection}{1em}{}

\newcommand{\abs}[1]{\left|#1\right|}

\newcommand{\dd}{\mathrm{d}}
\newcommand{\C}{{\mathbb C}}

\newcommand{\Mod}[1]{(\mathrm{mod}\,#1)}

\renewcommand{\le}{\leqslant}
\renewcommand{\ge}{\geqslant}

\usepackage{hyperref} 
\hypersetup{
	colorlinks=true,      
	linkcolor=blue,        
	urlcolor=blue,         
	citecolor=blue        
}

\usepackage[capitalize]{cleveref}

\usepackage{aliascnt}

\newtheorem{thm}{Theorem}
\newtheorem*{thm*}{Theorem}

\newcommand{\newaliasedtheorem}[3]{%
    \newaliascnt{#1}{thm}%
    \newtheorem{#1}[#1]{#3}%
    \aliascntresetthe{#1}%
    \crefname{#1}{#2}{#2s}%
    \Crefname{#1}{#3}{#3s}%
}

\newaliasedtheorem{lem}{lemma}{Lemma}
\newaliasedtheorem{clm}{claim}{Claim}
\newaliasedtheorem{prop}{proposition}{Proposition}
\newaliasedtheorem{cor}{corollary}{Corollary}
\newaliasedtheorem{axm}{axiom}{Axiom}
\newaliasedtheorem{fact}{fact}{Fact}
\newaliasedtheorem{conj}{conjecture}{Conjecture}

\theoremstyle{remark}
\newaliasedtheorem{rem}{remark}{Remark}
\newaliasedtheorem{rmk}{remark}{Remark}
\newaliasedtheorem{note}{note}{Note}
\newaliasedtheorem{qs}{question}{Question}
\newaliasedtheorem{obs}{observation}{Observation}

\theoremstyle{definition}
\newaliasedtheorem{defn}{definition}{Definition}

\title{On a series related to the M\"obius function}
\author{Deep Thakur}
\date{}

\begin{document}
\maketitle	
\bgroup
\let\thefootnote\relax\footnotetext{The Institute of Mathematical Sciences, A CI of Homi Bhabha National Institute, 
CIT Campus, Taramani, Chennai 600113, India. Email: deepthakur@imsc.res.in}
\egroup

\begin{abstract}
For a natural number $n>1$ let $P^-(n)$ denote its smallest prime divisor. Alladi and Johnson proved that $\sum_{P^{-}(n)\equiv l \Mod{k}} \frac{\mu(n)\omega(n)}{n} = 0$ for $(l,k)=1$, and Tenenbaum showed that the partial sums of this series up to $X$ are $O_\varepsilon\bigl((\log\log X)^{5/3+\varepsilon}/\log X\bigr)$. In this article, we prove that these partial sums are in fact equal to $C(k,l)/\log X + O\bigl((\log\log X)^4/(\log X)^2\bigr)$, with an explicit constant $C(k,l)$. We also investigate the behaviour of these constants. In particular, we show under GRH that the number of reduced classes $l\Mod{k}$ for which $C(k,l)<0$ is $\varphi(k)(1- o(1))$ as $k\to\infty$.  
\end{abstract}

\section{Introduction}

Throughout, the letters $p$ and $q$ denote prime numbers. For a natural number $n>1$ let $P^-(n)$ denote its smallest prime divisor, and set $P^-(1)=\infty$. Furthermore, let $\mu$ and $\omega$ denote the M\"obius function and the number of distinct prime divisors, respectively. For natural numbers $n$ with $P^-(n)$ lying in an arithmetic progression $l\Mod{k}$ where $(l,k)=1$, it was established by Alladi \cite{alladi1977} that 
$$
\sum_{\substack{n\ge 2 \\ P^{-}(n)\equiv l\Mod{k}}} \frac{\mu(n)}{n} =-\frac{1}{\varphi(k)}.
$$
Alladi's proof was based on a duality identity between the smallest and the largest prime factors. By appealing to a higher-order duality identity between the smallest and the second-largest prime factors of natural numbers, Alladi and Johnson \cite{alladi2026} proved that
$$
\sum_{P^{-}(n)\equiv l \Mod{k}} \frac{\mu(n)\omega(n)}{n} = 0.
$$ 
Moreover, they showed that the estimate
$$
\sum_{\substack{n\le X \\ P^{-}(n)\equiv l \Mod{k}}} \frac{\mu(n)\omega(n)}{n} = O\left( \frac{(\log\log X)^{5/2}}{\sqrt{\log X}}\right)
$$ 
holds for all large $X>0$. This was subsequently refined by Tenenbaum \cite{tenenbaum} to the estimate
$$
\sum_{\substack{n\le X \\ P^{-}(n)\equiv l \Mod{k}}} \frac{\mu(n)\omega(n)}{n} = O_\varepsilon \left( \frac{(\log\log X)^{5/3+\varepsilon}}{\log X}\right).
$$ 
In this article, we provide an asymptotic formula for these partial sums. We prove the following.

\begin{thm} \label{thm:thm}

Let $l,k$ be integers satisfying $1\le l\le k$ and $(l,k)=1$. Then, we have for large $X>0$ that
$$
\sum_{\substack{n\le X \\ P^-(n)\equiv l\Mod{k}}} \frac{\mu(n)\omega(n)}{n} = \frac{C(k,l)}{\log X} +O\left( \frac{(\log\log X)^4}{(\log X)^2} \right)
$$
where for all $x\ge 1$ we put $\zeta(s,x) = \prod_{q\le x}\left(1 - q^{-s}\right)^{-1}$ and
$$
C(k,l) = -\frac{1}{\varphi(k)} +\sum_{p} \left( \mathds{1}_{p\equiv l\Mod{k}}-\frac{1}{\varphi(k)}\right) \frac{\zeta(1,p)}{p}.
$$

\end{thm}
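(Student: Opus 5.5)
Write $n=pm$ with $p=P^-(n)$ and $P^-(m)>p$, $m$ squarefree. Then $\mu(n)\omega(n)=-\mu(m)(1+\omega(m))$, so the sum equals
$$
S(X)=-\sum_{p\equiv l\Mod k}\frac1p\sum_{\substack{m\le X/p\\ P^-(m)>p}}\frac{\mu(m)(1+\omega(m))}{m}.
$$
For $y=X/p$, the inner sum splits as $A_p(y)+B_p(y)$, where
$$
A_p(y)=\sum_{\substack{m\le y\\ P^-(m)>p}}\frac{\mu(m)}{m},\qquad
B_p(y)=\sum_{\substack{m\le y\\ P^-(m)>p}}\frac{\mu(m)\omega(m)}{m}.
$$

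\textbf{Step 1: asymptotics for the inner sums.} The first step is uniform estimates for $A_p$ and $B_p$. The generating Dirichlet series is $F(s,z)=\prod_{q>p}(1-z q^{-s})$. Then $A_p$ corresponds to $z=1$, namely $\zeta(s)^{-1}\zeta(s,p)$, and $B_p$ corresponds to $\partial_z|_{z=1}$, namely $-\zeta(s)^{-1}\zeta(s,p)\sum_{q>p}q^{-s}/(1-q^{-s})$. Near $s=1$, $\sum_{q>p}q^{-s}\approx \log\frac{1}{s-1}-\log\log p$. A Selberg--Delange type argument (a Perron integral with a zero-free region, or elementary convolution with the prime number theorem) should give the following for $y\ge p$:
$$
A_p(y)=O\!\left(\frac{\zeta(1,p)}{\log^{A}y}\right)+(\text{tiny}),
\qquad
B_p(y)= \frac{\zeta(1,p)}{\log y}+O\!\left(\frac{\zeta(1,p)\log\log y}{\log^2 y}\right).
$$
The heuristic for $B_p$ is that $\zeta(s)^{-1}\log\frac1{s-1}$ behaves like $(s-1)\log\frac1{s-1}$, whose coefficients sum to $1/\log y$. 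For $p\le y<p^{\text{small}}$ the sum is essentially $1$ (from $m=1$) plus a contribution from primes $q\in(p,y]$, handled via Mertens. Getting a uniform treatment across the whole range $p\le y$, including $y$ close to $p$, is delicate. I would instead use the identity
$$
\sum_{P^-(m)>p}\mu(m)(1+\omega(m))m^{-1}=0
$$
(Alladi--Johnson's mechanism, convergent) and write the inner sum as minus the tail $m>y$. The tail is then estimated via the above for $y$ large and trivially otherwise.

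\textbf{Step 2: summing over $p$ with the progression condition.} I would write $\mathds 1_{p\equiv l}=\frac1{\varphi(k)}+\bigl(\mathds 1_{p\equiv l}-\frac1{\varphi(k)}\bigr)$. The balanced part converges absolutely in $p$ thanks to the prime number theorem in progressions (Siegel--Walfisz, $k$ fixed). Inserting $1/\log(X/p)=1/\log X+O(\log p/\log^2X)$ there yields $\frac{1}{\log X}\sum_p(\mathds 1-\frac1{\varphi(k)})\zeta(1,p)/p$, up to sign conventions that match the stated $C(k,l)$. The errors are bounded using $\zeta(1,p)\asymp\log p$, with the range $p>X^{1/2}$ or $p$ near $X$ handled separately; these produce the $(\log\log X)^4/\log^2X$ loss. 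The $\frac1{\varphi(k)}$ part is exactly $\frac{1}{\varphi(k)}$ times the full sum over all $n\ge2$ of $\mu(n)\omega(n)/n$ up to $X$, since $\mu(1)\omega(1)=0$. That sum equals $-\sum_{n\le X}\mu(n)\omega(n)/n$ with total contribution $-\frac{1}{\varphi(k)}\cdot\frac{1}{\log X}$ after sign bookkeeping; this is computed directly from the Dirichlet series $-\zeta(s)^{-1}\sum_q q^{-s}/(1-q^{-s})$, whose singular part near $s=1$ is $(s-1)\log\frac1{s-1}$. This produces the $-1/\varphi(k)$ term in $C(k,l)$.

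\textbf{Main obstacle.} The hardest step is uniformity of Step 1 in $p$ across the full range $p\le X$. For $p$ comparable to $X$, $\log(X/p)$ is small, and the expansion in $1/\log y$ fails. Here I would show directly that the balanced weights make the contribution from $p>X^{1/(\log\log X)}$ small. The estimate $\sum_{X^{\theta}<p\le X}(\mathds 1-\frac1{\varphi(k)})\zeta(1,p)/p\ll \log^{-A}X$ would be combined with a bound on the inner sums via partial summation in $p$, using the smoothness in $p$ of the inner sum. The $\log\log$ powers would arise from this splitting together with the $\log\log y$ in the Selberg--Delange error.
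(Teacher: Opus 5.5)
Your skeleton is essentially the paper's: factor out $p=P^-(n)$, prove a Selberg--Delange asymptotic for each fixed $p$, and split $\mathds{1}_{p\equiv l\Mod{k}}=\frac1{\varphi(k)}+\bigl(\mathds{1}_{p\equiv l\Mod{k}}-\frac1{\varphi(k)}\bigr)$, with the unrestricted sum supplying $-1/\varphi(k)$. But the step that actually produces the error term is missing, and the split point you propose cannot work.

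The per-prime asymptotic carries a genuinely $p$-dependent second-order term:
\[
V_p(X)=\frac{\zeta(1,p)}{p\log X}+O\Bigl(\frac{(\log p)^2}{p(\log X)^2}\Bigr).
\]
The $(\log p)^2$ comes from $\frac{\dd}{\dd s}\zeta(s+1,p)\big|_{s=0}\asymp\zeta(1,p)\log p$. Your error $O(\zeta(1,p)\log\log y/\log^2y)$ misses this. Your main term for $B_p(y)$ also has the wrong sign: it is $-\zeta(1,p)/\log y$, because the series of $B_p$ behaves like $-\zeta(1,p)(s-1)\log\frac1{s-1}$ near $s=1$.

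You propose to sum errors absolutely over $p\le Y$ via $\zeta(1,p)\asymp\log p$. Done that way, this error, and likewise the error in replacing $1/\log(X/p)$ by $1/\log X$, costs $\asymp(\log Y)^2/(\log X)^2$. For $Y=X^{1/\log\log X}$ that is $\asymp(\log\log X)^{-2}$, which swamps the main term $C(k,l)/\log X$. In any case, the paper's contour argument only gives the per-prime asymptotic uniformly for $p\le\exp(\sqrt{\log X})$.

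So you are forced to take $\log Y\asymp(\log\log X)^2$. Everything then rests on bounding $\sum_{Y<p\le X}a_pV_p(X)$, with $a_p:=\mathds{1}_{p\equiv l\Mod{k}}-\frac1{\varphi(k)}$, for $Y$ that small. Your large-prime sketch does not supply this. The inner sum $\sum_{m\le X/p,\,P^-(m)>p}\mu(m)(1+\omega(m))/m$ is neither smooth in $p$ nor of the form $\zeta(1,p)\times(\text{smooth})$ in that range; it is identically $1$ for $p>\sqrt X$. So you cannot pull out the weights $\zeta(1,p)/p$ and invoke cancellation of $\sum a_p\zeta(1,p)/p$.

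The missing idea is the paper's interchange of summations in the tail:
\[
\sum_{Y<p\le X}a_pV_p(X)=-\sum_{n<X/Y}\frac{\mu(n)(\omega(n)+1)}{n}\sum_{\substack{Y<p\le X/n\\ p<P^-(n)}}\frac{a_p}{p}.
\]
The paper does this character by character, with $\chi(p)$ in place of $a_p$. The inner sum over an interval of primes is $\ll\exp(-c\sqrt{\log Y})$ uniformly, by the prime number theorem for progressions. The outer sum is bounded trivially by $\sum_{n<X/Y}(\omega(n)+1)/n\ll\log X\log\log X$.

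Equivalently, use Abel summation in $p$. It needs only that the inner sum has total variation $\ll\log X\log\log X$ as a function of $p$, since each $m$ drops out of it at most once; this is the correct replacement for your `smoothness in $p$'.

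This crude bound is already $O((\log X)^{-2})$ once $Y=\exp(C(\log\log X)^2)$ with $C$ large. Balancing it against the $(\log Y)^2/(\log X)^2$ loss from $p\le Y$ is exactly where $(\log\log X)^4$ comes from. Without this step your outline does not isolate the $1/\log X$ term. A minor point: the balanced series over $p$ converges only conditionally, not absolutely.
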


Hence, establishing the correct order of magnitude for these partial sums is tied to the non-vanishing of the constants $C(k,l)$. We discuss the behaviour of these constants in \Cref{sec:constants}.

\section{Plan of the proof}

It follows from the orthogonality of Dirichlet characters $\Mod{k}$ that 
\begin{align*}
\sum_{\substack{n\le X \\ P^-(n)\equiv l\Mod{k}}} \frac{\mu(n)\omega(n)}{n} &= \frac{1}{\varphi(k)}\sum_{\chi\Mod{k}} \overline{\chi(l)} \sum_{n\le X} \frac{\mu(n)\omega(n)\chi(P^-(n))}{n}.
\end{align*}
For a prime $p$ we define 
$$
V_p(X) = \sum_{\substack{n\le X\\ P^-(n)=p }} \frac{\mu(n)\omega(n)}{n}.
$$
Then we see that the contribution coming from the trivial character is 
$$
\frac{1}{\varphi(k)}\sum_{\substack{n\le X\\ (P^-(n),k)=1}} \frac{\mu(n)\omega(n)}{n} = \frac{1}{\varphi(k)} \sum_{n \le X} \frac{\mu(n)\omega(n)}{n}  - \frac{1}{\varphi(k)} \sum_{p\vert k}V_p(X),
$$
and for a non-trivial character $\chi \Mod{k}$ the contribution is
$$
\frac{\overline{\chi(l)}}{\varphi(k)} \sum_{n\le X} \frac{\mu(n)\omega(n)\chi(P^-(n))}{n} = \frac{\overline{\chi(l)}}{\varphi(k)} \sum_{p\le X} \chi(p)V_p(X).
$$
We have the following uniform estimate for the quantity $V_p(X)$.

\begin{prop}\label{prop:prop}

For large $X>0$ we have 
$$
V_p(X)  = \frac{\zeta(1,p)}{p\log X} +O\left(\frac{(\log p)^2}{p(\log X)^2}\right),
$$
uniformly for $p\le \exp(\sqrt{\log X})$.

\end{prop}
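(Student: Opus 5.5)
Writing $n=pm$ with $m$ squarefree and $P^-(m)>p$, we have $\mu(n)\omega(n)=-\mu(m)(\omega(m)+1)$. Hence
$$
V_p(X) = -\frac{1}{p}\sum_{\substack{m\le X/p\\ P^-(m)>p}} \frac{\mu(m)(\omega(m)+1)}{m}.
$$
The plan is to handle this sum through the generating Dirichlet series
$$
F_p(s,z)=\sum_{P^-(m)>p}\frac{\mu(m)z^{\omega(m)}}{m^{s}}=\prod_{q>p}\Bigl(1-\frac{z}{q^s}\Bigr).
$$
The function we need is $\partial_z(zF_p)$ at $z=1$, i.e.
$$
G_p(s)=F_p(s,1)+\partial_zF_p(s,1), \qquad F_p(s,1)=\frac{\zeta(s,p)}{\zeta(s)},\qquad \partial_zF_p(s,1)=-\frac{\zeta(s,p)}{\zeta(s)}\sum_{q>p}\frac{1}{q^s-1}.
$$
Near $s=1$ we have $1/\zeta(s)\approx (s-1)$ and $\sum_{q>p}(q^s-1)^{-1}\approx \log\frac{1}{s-1}-\log\log p+O(1)$. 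So $G_p(s)\approx \zeta(1,p)\,(s-1)\bigl(1-\log\frac{1}{s-1}+\dots\bigr)$.

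The logarithmic singularity $(s-1)\log(s-1)$ is what produces a main term of size $1/\log X$ for the partial sums $\sum_{m\le Y}a_m/m$. Heuristically, by a Selberg--Delange / Hankel contour argument, a singularity $-(s-1)\log\frac{1}{s-1}$ contributes $\sum_{m\le Y} a_m/m \sim -\frac{1}{\log Y}$, since $\frac{1}{2\pi i}\int (s-1)\log(s-1)\,Y^{s-1}\frac{ds}{s-1}$ picks up $-1/\log Y$. Multiplying by $-1/p$ and by the constant $\zeta(1,p)$ gives $\zeta(1,p)/(p\log X)$, matching the claim. Replacing $\log(X/p)$ by $\log X$ costs $O(\log p/(p\log^2 X))$, which is acceptable.

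Rather than doing contour integration with a $p$-dependent product, I would argue more elementarily. The conditions $P^-(m)>p$ and $\mu(m)$ are handled by the identity
$$
\sum_{P^-(m)>p}\frac{\mu(m)g(m)}{m}=\sum_{d\mid P(p)}\frac{\mu\ast\cdots}{\cdots},
$$
that is, $\mathds 1_{P^-(m)>p}\,\mu(m)$ is the convolution of $\mu$ with the multiplicative function supported on $p$-smooth numbers with Euler factor $(1-q^{-s})^{-1}$, whose series at $s=1$ is $\zeta(1,p)$. Similarly, $\omega(m)$ splits as $\sum_{q\mid m}1$. The sum then reduces to known asymptotics of $\sum_{m\le Y}\mu(m)\omega(m)/m$ and $\sum_{m\le Y}\mu(m)/m$, combined with the prime-number-theorem-type estimates $\sum_{m\le Y}\mu(m)/m\ll e^{-c\sqrt{\log Y}}$ and
$$
\sum_{m\le Y}\frac{\mu(m)\omega(m)}{m}=-\frac{1}{\log Y}+O\Bigl(\frac{1}{\log^2 Y}\Bigr).
$$
The latter I would derive via the Selberg--Delange method applied to $\partial_z\bigl(\zeta(s)^{-z}H(s,z)\bigr)$ at $z=1$. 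Convolving these with the $p$-smooth weights $d$ (summing $1/d$ over $p$-smooth $d$ with $d\le\sqrt X$, and bounding the tail via Rankin's trick) gives a main term of $\zeta(1,p)$ times $-1/\log(X/d)$. Expanding $1/\log(X/d)=1/\log X+O(\log d/\log^2X)$ yields the error $\sum_{d\ p\text{-smooth}}\log d/d\ll \zeta(1,p)\log p\ll(\log p)^2$, exactly the stated error. The extra terms coming from primes $q\le p$ dividing $\omega$ contribute through $\sum_{q\le p}1/(q-1)\cdot(\text{lower order})$, again of size $\log p\log\log p$, which fits within $(\log p)^2$.

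The main obstacle is uniformity in $p$. The tail of $p$-smooth $d$ with $d>\sqrt X$ must be negligible, and Rankin's bound gives $\ll \exp(-c\log X/\log p)\,(\log p)^{O(1)}$. For $p\le\exp(\sqrt{\log X})$ this is $\ll \exp(-c\sqrt{\log X})$, which is acceptable. This restriction is precisely where the hypothesis on $p$ is used.
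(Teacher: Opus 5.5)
Your argument is correct, but it is organized differently from the paper's proof.

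\textbf{The paper's route.} The paper applies the Selberg--Delange method directly to the $p$-dependent series:
$$\frac{F_p(s+1)}{s}=\frac{\zeta(s+1,p)}{ps\zeta(s+1)}\bigl(-\log s+R_p(s)\bigr).$$
It shifts to a truncated Hankel contour around $[-\kappa_1,0]$ with $\kappa_1\asymp(\log X)^{-1/2}$. The holomorphic part $R_p$ integrates to zero, and what remains is $\int_{-\kappa_1}^{0}\frac{\zeta(\sigma+1,p)}{p\sigma\zeta(\sigma+1)}(X/p)^{\sigma}\,d\sigma$. Uniformity in $p$ comes from $\kappa_1\log p\le\sqrt{c_0}$, which keeps $\zeta(s+1,p)$ and $\sum_{q\le p}(q^{s+1}-1)^{-1}$ controlled by powers of $\log p$ on the contour. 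The $(\log p)^2$ in the error term comes from $\frac{d}{d\sigma}\log\zeta(\sigma+1,p)\ll\log p$.

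\textbf{Your route.} Made precise, your plan is the identity
$$\sum_{P^-(m)>p}\frac{\mu(m)(\omega(m)+1)}{m^s}=\zeta(s,p)\sum_{n\ge1}\frac{\mu(n)(\omega(n)+1)}{n^s}+\frac{\zeta(s,p)}{\zeta(s)}\sum_{q\le p}\frac{1}{q^s-1}.$$
So the restricted sum up to $Y=X/p$ equals $\sum_{d}d^{-1}\sum_{n\le Y/d}\mu(n)(\omega(n)+1)/n$ over all $p$-smooth $d$, plus a second term. In that second term, nonnegative $p$-smooth weights of total mass $\zeta(1,p)\sum_{q\le p}\frac{1}{q-1}$ are convolved with $\mu$ alone, not with $\mu\omega$. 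Hence your ``extra terms'' are negligible (exponentially small), not merely of size $\log p\log\log p/(\log X)^2$.

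\textbf{Comparison.} In your approach the complex analysis is done only once, for the unrestricted sum of the paper's Remark together with $\sum_{n\le Z}\mu(n)/n\ll e^{-c\sqrt{\log Z}}$. All uniformity in $p$ is then elementary: Rankin's trick handles $d>\sqrt{Y}$, and $\sum_d\log d/d=\zeta(1,p)\sum_{q\le p}\frac{\log q}{q-1}\asymp(\log p)^2$ makes the origin of the error term transparent. The paper's route avoids the convolution bookkeeping and tail estimates, at the cost of tracking the $p$-dependence inside the contour bounds.

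\textbf{Two small slips, both harmless.}
\begin{enumerate}
\item The convolution must run over all $p$-smooth $d$, not over $d\mid P(p)$ as in your first display. Your prose already says this correctly.
\item Replacing $\log(X/p)$ by $\log X$ costs $O\bigl((\log p)^2/(p(\log X)^2)\bigr)$, not $O\bigl(\log p/(p(\log X)^2)\bigr)$, because of the factor $\zeta(1,p)\asymp\log p$. This is still within the stated error.
\end{enumerate}
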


From \Cref{prop:prop} we may easily deduce \Cref{thm:thm} as follows.

\begin{proof}[Proof of \Cref{thm:thm}]
We have that (see \Cref{rem:unrestricted} at the end of \Cref{sec:proof})
$$
 \sum_{n \le X} \frac{\mu(n)\omega(n)}{n} = -\frac{1}{\log X} + O\left(\frac{1}{(\log X)^2}\right),
$$
thus it follows from \Cref{prop:prop} that 
\begin{align*}
\frac{1}{\varphi(k)}\sum_{\substack{n\le X\\ (P^-(n),k)=1}} \frac{\mu(n)\omega(n)}{n} &= -\left( \frac{1}{\varphi(k)} + \frac{1}{\varphi(k)} \sum_{p\vert k} \frac{\zeta(1,p)}{p}\right) \frac{1}{\log X}\\
&\qquad + O\left(\frac{1}{(\log X)^2}\right).
\end{align*}
Let $1\le Y \le \exp(\sqrt{\log X})$ to be chosen later. Then, we have 
$$
\sum_{p \le X} \chi(p)V_p(X) = \sum_{p \le Y} \chi(p)V_p(X) + \sum_{Y < p \le X} \chi(p)V_p(X).
$$
It follows from \Cref{prop:prop} that 
\begin{align*}
\sum_{p \le Y} \chi(p)V_p(X) &= \frac{1}{\log X}\sum_{p \le Y} \frac{\chi(p)\zeta(1,p)}{p} + O\left(\frac{(\log Y)^2}{(\log X)^2}\right) 
\\
&= \frac{1}{\log X}\sum_{p} \frac{\chi(p)\zeta(1,p)}{p}\\
&\qquad + O\left(\frac{(\log Y)^2}{(\log X)^2}+\frac{\exp(-c\sqrt{\log Y})}{\log X}\right) ,
\end{align*}
here we have used the estimates
$$
\zeta(1,p) = e^\gamma \log p +O(\exp(-c\sqrt{\log p}))
$$
and 
\begin{gather*}
\sum_{X_1 < p \le X_2}  \frac{\chi(p)\log p}{p} \ll \exp(-c\sqrt{\log X_1}),\\
\sum_{X_1 < p \le X_2}  \frac{\chi(p)}{p} \ll \exp(-c\sqrt{\log X_1}).
\end{gather*}
The last two estimates follow from the prime number theorem for arithmetic progressions (see \cite[Chapter~20]{davenport}) 
$$
\sum_{p\le X} \chi(p) \log p \ll X \exp(-c\sqrt{\log X}),
$$
followed by partial summation. And for the tail we estimate as follows,
\begin{align*}
 \sum_{Y < p \le X} \chi(p)V_p(X) &= -\sum_{Y < p \le X} \frac{\chi(p)}{p} \sum_{\substack{n\le X/p \\ P^-(n)>p }}\frac{\mu(n)(\omega(n)+1)}{n} 
 \\
 & = - \sum_{n < X/Y} \frac{\mu(n)(\omega(n)+1)}{n} \sum_{\substack{Y < p \le X/n \\ p < P^-(n)}}  \frac{\chi(p)}{p} 
 \\
 & \ll \exp(-c\sqrt{\log Y}) \sum_{n < X/Y} \frac{\omega(n)+1}{n}
  \\
 & \ll (\log\log X)(\log X)\exp(-c\sqrt{\log Y}).
\end{align*}
Choosing $Y=\exp\left(\tfrac{10}{c^2}(\log\log X)^2\right)$, we see that 
$$
\sum_{p \le X} \chi(p)V_p(X) = \frac{1}{\log X}\sum_{p} \frac{\chi(p)\zeta(1,p)}{p} + O\left( \frac{(\log\log X)^4}{(\log X)^2} \right).
$$ 
Hence,
\begin{align*}
\frac{\overline{\chi(l)}}{\varphi(k)} \sum_{n\le X} \frac{\mu(n)\omega(n)\chi(P^-(n))}{n} &= \left(\frac{1}{\varphi(k)}\sum_{p} \frac{\overline{\chi(l)}\chi(p)\zeta(1,p)}{p} \right)\frac{1}{\log X}\\
&\qquad + O\left( \frac{(\log\log X)^4}{(\log X)^2} \right),
\end{align*}
and it follows from the orthogonality of Dirichlet characters $\Mod{k}$ that
\begin{align*}
&\frac{1}{\varphi(k)}\sum_{\substack{n\le X\\ (P^-(n),k)=1}} \frac{\mu(n)\omega(n)}{n} + \frac{1}{\varphi(k)} \sum_{\chi\neq\chi_0\Mod{k}} \overline{\chi(l)} \sum_{n\le X} \frac{\mu(n)\omega(n)\chi(P^-(n))}{n} 
\\
&= -\frac{1}{\varphi(k)}\left( 1 + \sum_{p\vert k} \frac{\zeta(1,p)}{p} - \sum_{\chi\neq\chi_0\Mod{k}} \sum_{p} \frac{\overline{\chi(l)}\chi(p)\zeta(1,p)}{p} \right)\frac{1}{\log X}\\
&\qquad + O\left( \frac{(\log\log X)^4}{(\log X)^2} \right)
\\
&= \left( -\frac{1}{\varphi(k)} +\sum_{p} \left( \mathds{1}_{p\equiv l\Mod{k}}-\frac{1}{\varphi(k)}\right) \frac{\zeta(1,p)}{p} \right)\frac{1}{\log X}\\
&\qquad + O\left( \frac{(\log\log X)^4}{(\log X)^2} \right) .
\end{align*}
\end{proof}

In the next section we prove \Cref{prop:prop} as an application of the Selberg--Delange method.

\section{Proof of \texorpdfstring{\Cref{prop:prop}}{Proposition 2}}

\label{sec:proof}
We write 
$$
V_p(X) = -\frac{1}{p} \sum_{\substack{n\le X/p \\ P^-(n)>p}} \frac{\mu(n)(\omega(n)+1)}{n}
$$
and consider the Dirichlet series 
$$
F_p(s) = -\frac{1}{p}  \sum_{\substack{n\ge 1 \\ P^-(n) > p}} \frac{\mu(n)(\omega(n)+1)}{n^s}.
$$
An easy calculation shows that 
$$
F_p(s) = \frac{\zeta(s,p)}{p\zeta(s)} \left( \sum_{q>p} \frac{1}{q^s-1} - 1  \right).
$$
For $\sigma>1$, we see that
$$
 \sum_{q} \frac{1}{q^s-1} - \log \zeta(s) = \sum_{q} \sum_{j=2}^\infty \frac{j-1}{jq^{js}} = H(s),
$$
where the function $H(s)$ is bounded and holomorphic in the half-plane $\sigma \ge 3/4$. Thus, 
$$
\frac{F_p(s+1)}{s} = \frac{\zeta(s+1,p)}{ps\zeta(s+1)} \left( -\log (s) +R_p(s)\right),
$$
where $\log$ denotes the principal branch of logarithm on $\C\setminus (-\infty,0]$, and 
$$
R_p(s) = \log (s\zeta(s+1)) + H(s+1) - 1 -  \sum_{q\le p} \frac{1}{q^{s+1}-1} .
$$
Let $c_0>0$ be a sufficiently small absolute constant such that $\zeta(s)$ has no zeros in the region 
$$
\sigma \ge 1 - \frac{c_0}{\log(\abs{t} +2)}.
$$
Let 
$$
\kappa_0 = \frac{1}{\log (X/p)},~ T = \exp(\sqrt{c_0\log X})\text{  and }\kappa_1 = \frac{c_0}{\log (T+2)}.
$$ 
Note that $\kappa_1\log p\le\sqrt{c_0}$ and $\log(X/p)\ge\log X-\sqrt{\log X}$, since $p\le\exp(\sqrt{\log X})$. An application of the effective Perron formula (see \cite[Theorem II.2.3]{tenenbaumbook}) shows that 
$$
V_p(X) = \frac{1}{2\pi i} \int_{\kappa_0 - i T}^{\kappa_0 + i T} \frac{F_p(s+1)}{s}\left(\frac{X}{p}\right)^s ds + O\left( \frac{(\log X)^2}{pT} \right).
$$

We now evaluate this integral by replacing the integration segment by a truncated Hankel contour around $[-\kappa_1, 0]$, concatenated with two vertical segments $[-\kappa_1 , -\kappa_1 \pm iT]$ and two horizontal segments $[-\kappa_1 \pm iT , \kappa_0 \pm iT]$. We have the classical zero-free region bounds (see \cite[Theorems~II.3.21 and II.3.22]{tenenbaumbook})
$$
\frac{1}{\log(\abs{t}+2)} \ll \abs{\zeta(s)} \text{ and } \abs{\log\zeta(s)} \ll \log\log(\abs{t}+4),
$$
uniformly in the region $\sigma \ge 1 - \frac{c_0}{\log(\abs{t} +2)}$ and $\abs{t} \ge 2$. 

On the horizontal segments $[-\kappa_1 \pm iT , \kappa_0 \pm iT]$, we have the estimate 
$$
\frac{F_p(s+1)}{s}\left(\frac{X}{p}\right)^s \ll \frac{(\log p)^2(\log T)^2}{pT},
$$
uniformly for all $p\le \exp(\sqrt{\log X})$, since there $\abs{(X/p)^s}\le e$ and $\abs s\ge T$. On the vertical segments $[-\kappa_1 , -\kappa_1 \pm iT]$, we have the estimate
$$
\frac{F_p(s+1)}{s}\left(\frac{X}{p}\right)^s \ll \frac{1}{p(\abs{t}+2)} \exp\left(-\sqrt{\tfrac{c_0}{2} \log X}\right),
$$
uniformly for all $p\le \exp(\sqrt{\log X})$, since there $\abs{(X/p)^s}=(X/p)^{-\kappa_1}\ll\exp(-\sqrt{c_0\log X})$ and the remaining factors are $\ll(\log X)^2/(p(\abs t+2))$. Thus, the contribution of the horizontal and the vertical segments to the integral is 
$$
O\left(\frac{1}{p}\exp\left(-\sqrt{\tfrac{c_0}{4} \log X}\right)\right).
$$
Since $\frac{\zeta(s+1,p)R_p(s)}{ps\zeta(s+1)}$ is holomorphic in a neighbourhood of the disc $\abs{s}\le\kappa_1$, it follows that 
$$
\frac{1}{2\pi i} \int_{\mathcal{H}} \frac{\zeta(s+1,p)R_p(s)}{ps\zeta(s+1)} \left(\frac{X}{p}\right)^s ds = 0
$$
where $\mathcal{H}$ is the Hankel contour around $[-\kappa_1, 0]$.
Hence,
$$
V_p(X) = -\frac{1}{2\pi i} \int_{\mathcal{H}} \frac{\zeta(s+1,p)\log (s)}{ps\zeta(s+1)} \left(\frac{X}{p}\right)^s ds + O\left(\frac{1}{p}\exp\left(-\sqrt{\tfrac{c_0}{4} \log X}\right)\right).
$$
On the upper edge ($re^{i\pi}$ for $0\le r\le \kappa_1$) of $\mathcal{H}$, we have $-\log s = - \log r - i\pi$, and on the lower edge ($re^{-i\pi}$ for $0\le r\le \kappa_1$) we have $-\log s = - \log r + i\pi$, while the contribution of a circle $\abs s=\varepsilon$ around $0$ tends to $0$ with $\varepsilon$. Thus, 
$$
-\frac{1}{2\pi i} \int_{\mathcal{H}} \frac{\zeta(s+1,p)\log (s)}{ps\zeta(s+1)} \left(\frac{X}{p}\right)^s ds = \int_{-\kappa_1}^{0} \frac{\zeta(\sigma+1,p)}{p\sigma\zeta(\sigma+1)} \left(\frac{X}{p}\right)^{\sigma} d\sigma.
$$
For $-\kappa_1 \le \sigma \le 0$, we have the estimate 
$$
\frac{\zeta(\sigma+1,p)}{p\sigma\zeta(\sigma+1)} = \frac{\zeta(1,p)}{p} + O\left( \frac{\abs{\sigma}(\log p)^2}{p} \right)
$$
uniformly for all $p\le \exp(\sqrt{\log X})$. This follows from $\frac{1}{\sigma\zeta(\sigma+1)}=1+O(\abs\sigma)$ and from $\frac{\dd}{\dd\sigma}\log\zeta(\sigma+1,p)=-\sum_{q\le p}\frac{\log q}{q^{\sigma+1}-1}\ll\log p$. Substituting this into the integral we obtain

\begin{align*}
\int_{-\kappa_1}^{0} \frac{\zeta(\sigma+1,p)}{p\sigma\zeta(\sigma+1)} \left(\frac{X}{p}\right)^{\sigma} d\sigma
&= \frac{\zeta(1,p)}{p} \int_{-\kappa_1}^{0} e^{\sigma \log(X/p)} d\sigma \\
&\qquad + O\left(\frac{(\log p)^2}{p} \int_{-\kappa_1}^{0} |\sigma| e^{\sigma \log(X/p)} d\sigma\right) \\
&= \frac{\zeta(1,p)}{p \log(X/p)} \left(1 - \left(\frac{X}{p}\right)^{-\kappa_1}\right) \\
&\qquad + O\left(\frac{(\log p)^2}{p (\log(X/p))^2}\right) \\
&= \frac{\zeta(1, p)}{p \log(X/p)} + O\left(\frac{(\log p)^2}{p (\log X)^2}\right) \\
&= \frac{\zeta(1, p)}{p \log X} + O\left(\frac{(\log p)^2}{p (\log X)^2}\right).
\end{align*}
This finishes the proof of \Cref{prop:prop}.

\begin{rem}\label{rem:unrestricted}
By an identical contour integration applied to the Dirichlet series 
$$
F(s) = \sum_{n=1}^{\infty} \frac{\mu(n)\omega(n)}{n^s} = -\frac{1}{\zeta(s)} \sum_{q} \frac{1}{q^s - 1},
$$
one obtains
$$
\sum_{n \le X} \frac{\mu(n)\omega(n)}{n} = -\frac{1}{\log X} + O\left(\frac{1}{(\log X)^2}\right).
$$
\end{rem}

\section{The constants \texorpdfstring{$C(k,l)$}{C(k,l)}}\label{sec:constants}

We begin with an elementary identity.

\begin{lem}\label{lem:lem}
Let $k\ge1$ and $(l,k)=1$. Then for every $X\ge1$, we have
$$
C(k,l)= \sum_{\substack{p\le X\\ p\equiv l\Mod{k}}} \frac{\zeta(1,p)}{p} -\frac{\zeta(1,X)}{\varphi(k)}+ \sum_{p>X}\left(\mathds{1}_{p\equiv l\Mod{k}}-\frac1{\varphi(k)}\right) \frac{\zeta(1,p)}{p}.
$$
\end{lem}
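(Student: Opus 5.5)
Comparing the definition of $C(k,l)$ in \Cref{thm:thm} with the claimed expression, the difference is confined to primes $p\le X$. Since the tail sums over $p>X$ coincide, it suffices to prove the finite identity
$$
-\frac{1}{\varphi(k)}-\frac{1}{\varphi(k)}\sum_{p\le X}\frac{\zeta(1,p)}{p} = -\frac{\zeta(1,X)}{\varphi(k)},
\qquad\text{i.e.}\qquad 1+\sum_{p\le X}\frac{\zeta(1,p)}{p}=\zeta(1,X).
$$
The splitting itself is legitimate: the full series defining $C(k,l)$ converges, as was used in the proof of \Cref{thm:thm} via the prime number theorem for arithmetic progressions. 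Hence we may separate off the finite sum over $p\le X$ without any rearrangement issue. Cancelling the common term $\frac{1}{\varphi(k)}$ then reduces the lemma to the displayed identity.

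I would prove that identity by telescoping. For consecutive primes $p^-<p$, with the convention $\zeta(1,x)=1$ for $x<2$, we have
$$
\zeta(1,p)=\zeta(1,p^-)\Bigl(1-\frac1p\Bigr)^{-1}.
$$
Multiplying out gives $\zeta(1,p)\bigl(1-\frac1p\bigr)=\zeta(1,p^-)$, that is,
$$
\zeta(1,p)-\zeta(1,p^-)=\frac{\zeta(1,p)}{p}.
$$
Summing over all primes $p\le X$ telescopes to $\zeta(1,X)-\zeta(1,1)=\zeta(1,X)-1$, since $\zeta(1,x)$ is constant between consecutive primes. This is exactly the required identity.

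The argument involves no real obstacle. The only points to check are the boundary convention $\zeta(1,x)=1$ for $1\le x<2$ (an empty product), which gives the constant $1$, and the fact that the rearrangement of the conditionally convergent series is handled by splitting at a finite point.
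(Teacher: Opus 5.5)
Your proposal is correct and matches the paper's proof: both reduce the lemma to the telescoping identity $1+\sum_{p\le Y}\zeta(1,p)/p=\zeta(1,Y)$, which the paper states without proof and you verify via $\zeta(1,p)-\zeta(1,p^-)=\zeta(1,p)/p$. The only difference is cosmetic: the paper compares partial sums up to $Y>X$ and lets $Y\to\infty$, while you split the convergent series at $X$ directly, which is equivalent.
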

 
\begin{proof}
For $Y\ge 1$, we note that
\begin{equation}\label{eq:tele}
1+\sum_{p\le Y} \frac{\zeta(1,p)}{p} = \zeta(1,Y).
\end{equation}
Hence, for $Y> X$ we have
\begin{align*}
&-\frac{1}{\varphi(k)} +\sum_{p \le Y} \left( \mathds{1}_{p\equiv l\Mod{k}}-\frac{1}{\varphi(k)}\right) \frac{\zeta(1,p)}{p}
\\
&\quad=
\sum_{\substack{p\le X \\ p\equiv l\Mod{k}}} \frac{\zeta(1,p)}{p} -\frac{\zeta(1,X)}{\varphi(k)}+\sum_{X< p \le Y}\left(\mathds{1}_{p\equiv l\Mod{k}}-\frac1{\varphi(k)}\right) \frac{\zeta(1,p)}{p}.
\end{align*}
The lemma then follows from letting $Y\to\infty$.
\end{proof}

For the rest of this section we assume the generalized Riemann hypothesis (GRH). Then
$$
\theta(x,\chi)=\sum_{p\le x}\chi(p)\log p\ll x^{1/2}(\log kx)^2
$$
for every non-trivial character $\chi\Mod{k}$ and $x\ge2$ \cite[Chapter~20]{davenport}. By partial summation, we get
$$
\sum_{y\le p\le Y}\frac{\chi(p)}{p}\ll\frac{\log ky}{y^{1/2}}
$$
uniformly for $Y\ge y\ge k$. Moreover, for $Y>X\ge k$ we get
\begin{align*}
\sum_{X<p\le Y}\chi(p)\frac{\zeta(1,p)}{p} &= \zeta(1,X)\sum_{X<p\le Y}\frac{\chi(p)}{p}+\sum_{X<q\le Y}\frac{\zeta(1,q)}{q}\sum_{q\le p\le Y}\frac{\chi(p)}{p}
\\
&\ll \frac{(\log X)(\log kX)}{X^{1/2}}+\sum_{q>X}\frac{(\log q)(\log kq)}{q^{3/2}}
\\
&\ll\frac{(\log kX)^2}{X^{1/2}}.
\end{align*}
Letting $Y\to\infty$, we obtain
\begin{equation}\label{eq:T}
\sum_{p>X}\chi(p)\frac{\zeta(1,p)}{p} \ll \frac{(\log kX)^2}{X^{1/2}} \qquad( X\ge k).
\end{equation}
 
\begin{thm}\label{thm:grh}
Assume GRH. Then, for large $k>0$ we have
$$
\#\big\{1\le l\le k:\ (l,k)=1,\ C(k,l)\ge0\big\}\ll\frac{\varphi(k)}{\log k}.
$$
In particular,
$$
\frac{1}{\varphi(k)}\#\big\{1\le l\le k:\ (l,k)=1,\ C(k,l)<0 \big\} = 1+O\left(\frac{1}{\log k}\right) .
$$
\end{thm}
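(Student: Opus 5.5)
Apply \Cref{lem:lem} with $X=k$:
$$
C(k,l)=\sum_{\substack{p\le k\\ p\equiv l\Mod{k}}}\frac{\zeta(1,p)}{p}-\frac{\zeta(1,k)}{\varphi(k)}+E(k,l),\qquad
E(k,l)=\sum_{p>k}\Big(\mathds 1_{p\equiv l\Mod k}-\tfrac1{\varphi(k)}\Big)\frac{\zeta(1,p)}{p}.
$$
Since every $p\le k$ with $p\equiv l\Mod k$ equals $l$, the first sum is $\mathds 1_{l\text{ prime}}\,\zeta(1,l)/l$. By Mertens, $\zeta(1,k)\asymp\log k$. Hence $C(k,l)\ge 0$ forces one of three alternatives: (a) $l$ is prime; (b) $\abs{E(k,l)}\ge \zeta(1,k)/(2\varphi(k))$; or (c) the main terms are otherwise too small. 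Alternative (c) cannot occur, since for $l$ not prime the main terms equal $-\zeta(1,k)/\varphi(k)$ exactly. So we only need to count $l$ in (a) and in (b).

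The count for (a) is at most $\pi(k)\ll k/\log k$. This is $\ll\varphi(k)/\log k$ only up to a factor $k/\varphi(k)\ll\log\log k$, which is a genuine issue. To remove it, I would sharpen (a). For a prime $l$ we have $\zeta(1,l)/l\ll (\log l)/l$, while $\zeta(1,k)/\varphi(k)\gg\log k/\varphi(k)$. So $l$ can contribute positively only if $(\log l)/l\gg \log k/\varphi(k)$, i.e.\ roughly $l\ll \varphi(k)$ (for $l\le\varphi(k)$ one has $\log l\le\log k$). Only primes $l\le A\varphi(k)$ matter, for a suitable constant $A$, and there are $\ll\varphi(k)/\log\varphi(k)\ll\varphi(k)/\log k$ of them.

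For (b) I use a second-moment (Parseval) bound over the residues $l$. By orthogonality,
$$
E(k,l)=\frac1{\varphi(k)}\sum_{\chi\ne\chi_0}\overline{\chi(l)}\,S_\chi,\qquad S_\chi=\sum_{p>k}\chi(p)\frac{\zeta(1,p)}{p},
$$
up to the primes $p\mid k$, which are all $\le k$ and therefore absent from the range $p>k$. Parseval then gives
$$
\sum_{(l,k)=1}\abs{E(k,l)}^2=\frac1{\varphi(k)}\sum_{\chi\ne\chi_0}\abs{S_\chi}^2.
$$
Applying \eqref{eq:T} with $X=k$ yields $S_\chi\ll(\log k)^2/k^{1/2}$, so the right-hand side is $\ll(\log k)^4/k$. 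By Chebyshev's inequality, the number of $l$ in (b) is
$$
\ll\frac{(\log k)^4}{k}\cdot\frac{\varphi(k)^2}{(\log k)^2}\le\varphi(k)\frac{(\log k)^2}{k}.
$$
This is far smaller than $\varphi(k)/\log k$.

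The main obstacle is bookkeeping in (a): making the threshold on prime $l$ precise (using $\zeta(1,l)\le c\log l$) so that the count is $\ll\varphi(k)/\log k$ and not $k/\log k$. If it is more convenient, one could instead apply \eqref{eq:T} at $X=\varphi(k)$-scale quantities, but I would first try the direct threshold argument above. Adding the counts from (a) and (b) gives the first claim, and the second claim follows by taking complements.
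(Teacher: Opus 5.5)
Your treatment of prime $l$ matches the paper: only primes $l\le 3A\varphi(k)$ can matter, because $\zeta(1,l)\le A\log k$. Step (b), however, has a genuine gap, hidden by an arithmetic slip. The product you compute is
\[
\frac{(\log k)^4}{k}\cdot\frac{\varphi(k)^2}{(\log k)^2}=\frac{\varphi(k)^2(\log k)^2}{k},
\]
not $\varphi(k)(\log k)^2/k$. Since $\varphi(k)\gg k/\log\log k$, this exceeds $\varphi(k)$, the total number of reduced residues, so the Chebyshev bound is vacuous.

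No adjustment of constants rescues it. If you bound $\sum_l\abs{E(k,l)}^2$ by $\max_\chi\abs{S_\chi}^2$ and want a count $\ll\varphi(k)/\log k$, you need $\abs{S_\chi}\ll(\log k/\varphi(k))^{1/2}$. But \eqref{eq:T} at $X=k$ only gives $(\log k)^2/k^{1/2}$, which is too weak by a factor of about $(\log k)^{3/2}$. The GRH bound $(\log kX)^2/X^{1/2}$ becomes strong enough only once $X\ge k(\log k)^3$.

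The missing idea is to split the tail at that point, as the paper does. With $X=k(\log k)^3$, write $E(k,l)=S(k,l)+T(k,l)$, where $S$ runs over $k<p\le X$ and $T$ over $p>X$.

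For $T$, \eqref{eq:T} gives $\max_\chi\abs{T(\chi)}^2\ll(\log k)^4/X=\log k/k$. Hence the count is $\ll\varphi(k)^2/(k\log k)\le\varphi(k)/\log k$.

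The intermediate range $k<p\le k(\log k)^3$ must be handled without pointwise GRH. By orthogonality,
\[
\sum_{(l,k)=1}\abs{S(k,l)}^2\le\sum_{p_1\equiv p_2\Mod{k}}\frac{\zeta(1,p_1)\zeta(1,p_2)}{p_1p_2},\qquad k<p_1,p_2\le X.
\]
\begin{itemize}
\item The diagonal $p_1=p_2$ contributes $\ll\log k/k$.
\item The pairs $p_2=p_1+mk$ with $1\le m<(\log k)^3$ are controlled by Selberg's upper-bound sieve, $\#\{x<p\le 2x:\ p+mk\in\mathbb P\}\ll x\log\log k/(\log k)^2$ for $x\ge k$. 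Summing over dyadic ranges and $m$ gives $\ll(\log\log k)^3/k$.
\end{itemize}
Chebyshev's inequality then bounds this part of the count by $\ll\varphi(k)/\log k$ as well. This upper-bound sieve for prime pairs is the ingredient your proposal lacks: it covers the range just above $k$, where pointwise GRH bounds are too weak.
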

 
\begin{proof}
Set $X=k(\log k)^3$. Let $\mathbb{P}$ denote the set of prime numbers. Let $1\le l<k$ with $(l,k)=1$. The only prime $p\le k$ with $p\equiv l\Mod{k}$ is $l$ itself, when $l$ is prime. Hence \Cref{lem:lem}, applied with $k$ in place of $X$, gives
$$
C(k,l) = \mathds{1}_{l\in\mathbb P}\frac{\zeta(1,l)}{l} - \frac{\zeta(1,k)}{\varphi(k)} + S(k,l) + T(k,l),
$$
where
$$
S(k,l) = \sum_{k<p\le X} \left(\mathds{1}_{p\equiv l\Mod{k}}-\frac1{\varphi(k)}\right) \frac{\zeta(1,p)}{p}
$$
and 
$$
T(k,l) = \sum_{p>X} \left(\mathds{1}_{p\equiv l\Mod{k}}-\frac1{\varphi(k)}\right) \frac{\zeta(1,p)}{p}.
$$
Since 
$$
\frac{\zeta(1,k)}{\varphi(k)} \ge \frac{1}{\varphi(k)}\sum_{n\le k}\frac1n\ge\frac{\log k}{\varphi(k)},
$$
if $C(k,l)\ge 0$, then at least one of the following inequalities must hold:
\begin{equation}\label{eq:PNT}
\mathds{1}_{l\in\mathbb P}\frac{\zeta(1,l)}{l} \ge \frac{\log k}{3 \varphi(k)},
\end{equation}
\begin{equation}\label{eq:BT}
\abs{S(k,l)} \ge \frac{\log k}{3 \varphi(k)},
\end{equation}
\begin{equation}\label{eq:GRH}
\abs{T(k,l)} \ge \frac{\log k}{3 \varphi(k)}.
\end{equation}

We have that $\zeta(1,l)\le A\log l\le A\log k$ for some absolute constant $A$. Hence the number of $1\le l < k$ with $(l,k)=1$ which satisfy \eqref{eq:PNT} is at most
$$
\pi(3A\varphi(k)) \ll \frac{\varphi(k)}{\log k}.
$$
 The number of such $l$ for which \eqref{eq:BT} holds is 
$$
\ll \frac{\varphi(k)^2}{(\log k)^2} \sum_{\substack{1\le l\le k\\ (l,k)=1}} \abs{S(k,l)}^2.
$$
We write 
$$
S(k,l) = \frac{1}{\varphi(k)} \sum_{\chi\neq\chi_0\Mod{k}} \overline{\chi(l)} S(\chi),
$$
where 
$$
S(\chi) = \sum_{k<p\le X} \frac{\chi(p)\zeta(1,p)}{p}.
$$
Hence, by orthogonality of characters, we obtain
\begin{align*}
\sum_{\substack{1\le l\le k\\ (l,k)=1}} \abs{S(k,l)}^2 
&= \frac{1}{\varphi(k)} \sum_{\chi\neq\chi_0\Mod{k}} \abs{S(\chi)}^2 
\\
&\ll \frac{1}{\varphi(k)}\sum_{\substack{k< p_1,p_2 \le X\\p_1\nequiv p_2\Mod{k}}} \frac{\zeta(1,p_1)\zeta(1,p_2)}{p_1p_2} + \sum_{\substack{k< p_1,p_2 \le X\\p_1\equiv p_2\Mod{k}}} \frac{\zeta(1,p_1)\zeta(1,p_2)}{p_1p_2} 
\\
&\ll \frac{(\log k)^2}{\varphi(k)} \left( \sum_{k < p \le X} \frac1p  \right)^2 + (\log k)^2 \sum_{\substack{k< p_1,p_2 \le X\\p_1\equiv p_2\Mod{k}}} \frac{1}{p_1p_2}
 \\
 &\ll \frac{(\log\log k)^2}{\varphi(k)} + (\log k)^2 \sum_{\substack{k< p_1<p_2 \le X\\p_1\equiv p_2\Mod{k}}} \frac{1}{p_1p_2}.
\end{align*}
For the remaining sum, we write $p_2 = p_1 + mk$ with $1 \le m < (\log k)^3$, so that
$$
\sum_{\substack{k< p_1<p_2 \le X\\p_1\equiv p_2\Mod{k}}} \frac{1}{p_1p_2} \le \sum_{1 \le m < (\log k)^3} \sum_{\substack{k < p \le X \\ p+mk \in \mathbb{P}}} \frac{1}{p(p+mk)}.
$$
It follows from Selberg's upper bound sieve (see \cite[Theorem~5.7]{HR74}) that
$$
\sum_{\substack{x<p\le 2x\\ p+mk\in\mathbb P}}1\ll \frac{x\log\log k}{(\log k)^2}
$$
uniformly for $x\ge k$ and $1\le m \le (\log k)^3$. Thus, 
\begin{align*}
\sum_{\substack{k< p_1<p_2 \le X\\p_1\equiv p_2\Mod{k}}} \frac{1}{p_1p_2}
&\le \sum_{1 \le m < (\log k)^3}\ \sum_{0\le j \le 5\log\log k}\
\sum_{\substack{2^jk < p \le 2^{j+1}k \\ p+mk \in \mathbb{P}}} \frac{1}{p(p+mk)}\\
&\le \sum_{1 \le m < (\log k)^3}\frac{1}{mk}\sum_{0\le j \le 5\log\log k}\frac{1}{2^jk}
\sum_{\substack{2^jk < p \le 2^{j+1}k \\ p+mk \in \mathbb{P}}}1\\
&\ll \frac{(\log\log k)^2}{k(\log k)^2}\sum_{1\le m<(\log k)^3}\frac1m
\ll\frac{(\log\log k)^3}{k(\log k)^2}.
\end{align*}
Hence, we have the estimate 
$$
\frac{\varphi(k)^2}{(\log k)^2} \sum_{\substack{1\le l\le k\\ (l,k)=1}} \abs{S(k,l)}^2 \ll \varphi(k)^2\left(\frac{(\log\log k)^2}{\varphi(k)(\log k)^2}+\frac{(\log\log k)^3}{k(\log k)^2}\right)\ll \frac{\varphi(k)}{\log k}.
$$
Finally, the number of such $l$ for which \eqref{eq:GRH} holds is 
$$
\ll \frac{\varphi(k)^2}{(\log k)^2} \sum_{\substack{1\le l\le k\\ (l,k)=1}} \abs{T(k,l)}^2.
$$
By a similar argument as above we get that 
$$
\sum_{\substack{1\le l\le k\\ (l,k)=1}} \abs{T(k,l)}^2 = \frac{1}{\varphi(k)} \sum_{\chi\neq\chi_0\Mod{k}} \abs{T(\chi)}^2,
$$
where 
$$
T(\chi) = \sum_{p>X} \frac{\chi(p)\zeta(1,p)}{p}.
$$
It follows from \eqref{eq:T} that 
\begin{align*}
\frac{\varphi(k)^2}{(\log k)^2} \sum_{\substack{1\le l\le k\\ (l,k)=1}} \abs{T(k,l)}^2 &\le \frac{\varphi(k)^2}{(\log k)^2} \max_{\chi\neq\chi_0\Mod{k}} \abs{T(\chi)}^2
\\
& \ll \frac{\varphi(k)^2}{(\log k)^2} \frac{(\log kX)^4}{X}
\\
& \ll \frac{\varphi(k)}{\log k}.
\end{align*}
Combining the bounds for the number of $l$ satisfying \eqref{eq:PNT}, \eqref{eq:BT} and \eqref{eq:GRH} proves the theorem.
\end{proof}

\begin{rmk}
Numerical computation of all $C(k,l)$ with $3\le k\le 300$ suggests that the constants $C(k,l)$ are always non-zero.
\end{rmk}

 \section*{Acknowledgement}
 
The author would like to thank Professor Jean-Marc Deshouillers for fruitful discussions and encouragement.

\end{document}